\theoremstyle{plain}
\newtheorem{theorem}{Theorem}[section]
\newtheorem{proposition}[theorem]{Proposition}
\theoremstyle{definition}
\newtheorem{definition}[theorem]{Definition}
\newtheorem{example}[theorem]{Example}
\theoremstyle{remark}
\numberwithin{equation}{section}
\DeclareMathSymbol{\widehatsym}{\mathord}{largesymbols}{"62}
\newcommand\lowerhathatsym{%
  \text{\smash{\rlap{\raisebox{-1.1ex}{$\widehatsym$}}\raisebox{-1.4ex}{%
    $\widehatsym$}}}}
\newcommand\hathatnospace[1]{%
  \mathchoice
    {\accentset{\displaystyle\lowerhathatsym}{#1}}
    {\accentset{\textstyle\lowerhathatsym}{#1}}
    {\accentset{\scriptstyle\lowerhathatsym}{#1}}
    {\accentset{\scriptscriptstyle\lowerhathatsym}{#1}}
}
\newcommand\widehathat[1]{%
   \mathclap{\phantom{\hat{#1}}}{\hathatnospace{#1}}}
\newcommand{\ZZ}{\mathbb{Z}} 
\newcommand{\QQ}{\mathbb{Q}} 
\newcommand{\CC}{\mathbb{C}} 
\newcommand{\iso}{\cong}      
\newcommand{\PP}{\mathbb{P}}  
\newcommand{\sheaf}[1]{\mathscr{#1}} 
\newcommand{\OO}{{\sheaf{O}}}   
\newcommand{\tsum}{{\textstyle\sum}}  
\newcommand{\res}[2]{\left.#1\right|_{#2}} 
\newcommand{\isoto}{\overset{\smash{\raisebox{-0.6ex}{$\textstyle\sim\;$}}}{\to}} 
\newcommand{\tensor}{\otimes}
\newcommand{\HHom}{\sheaf{H}\!\mathit{om}} 
\DeclareMathOperator{\rk}{rk} 
\DeclareMathOperator{\Pic}{Pic}
\DeclareMathOperator{\ch}{ch}
\DeclareMathOperator{\Hom}{Hom}
\DeclareMathOperator{\End}{End}
\DeclareMathOperator{\Ext}{Ext}
\DeclareMathOperator{\Spec}{Spec}
\DeclareMathOperator{\Sym}{Sym}
\DeclareMathOperator{\tr}{tr} 
\DeclareMathOperator{\DT}{DT} 
\DeclareMathOperator{\Hilb}{Hilb}
\begin{document}

\title[Counting sheaves]{Counting sheaves on Calabi-Yau and abelian threefolds}

\author{Martin G. Gulbrandsen}
\address{Stord/Haugesund University College, Norway}
\email{martin.gulbrandsen@hsh.no}

\subjclass[2010]{Primary 14N35; Secondary 14K05, 14D20}

\date{}

\begin{abstract}
We survey the foundations for Donaldson--Thomas invariants for
stable sheaves on algebraic threefolds with trivial canonical bundle, with
emphasis on the case of abelian threefolds.
\end{abstract}

\maketitle

Let $X$ be a Calabi--Yau threefold, in the weak sense that the canonical
sheaf $\omega_X$ is trivial.
The aim of Donaldson--Thomas theory is to make sense of ``counting'' the number of stable
sheaves on $X$.

This text consists of two parts: in the first part, Section \ref{sec:count}, we
give an informal and somewhat simplified introduction to the foundations for
Donaldson--Thomas invariants, following Behrend--Fantechi \cite{BF97, BF2008},
Li--Tian \cite{LT98}, Siebert \cite{siebert2004}, Thomas \cite{thomas2000},
Huybrechts--Thomas \cite{HT2010}, Behrend \cite{behrend2009}, and
Joyce--Song \cite{JS2011}. We put some emphasis on the possibility of having
nontrivial $H^1(\OO_X)$, so that line bundles may deform, as we have the
abelian situation in mind: in the second part, Section \ref{sec:abelian}, we
discuss recent work by the author \cite{gulbrandsen2011b}, where we modify the
standard setup surveyed in the first part, to obtain nontrivial
Donaldson--Thomas invariants for abelian threefolds $X$.

\section{Virtual counts}\label{sec:count}

We work over $\CC$ for simplicity.
Fix a polarization $H$ on the Calabi--Yau threefold $X$,
and let $M$ denote the Simpson moduli space \cite{simpson94, HL97}
of $H$-stable coherent sheaves on $X$,
with fixed Chern character $\ch \in \bigoplus_p H^{2p}(X,\QQ)$. We assume that $M$ is compact,
for instance by choosing the Chern classes such that strictly semistable
sheaves are excluded for numerical reasons. For simplicity we shall also
assume that there is a universal family, denoted $\sheaf{F}$, on $X\times M$.

The \emph{virtual dimension} is the guess at $\dim M$ one obtains from
deformation theory; at any point $p \in M$ it is
\begin{equation*}
d^{\text{vir}} =
\dim \underbrace{\Ext^1(\sheaf{F}_p,\sheaf{F}_p)}_{\text{tangents}}
- \dim \underbrace{\Ext^2(\sheaf{F}_p,\sheaf{F}_p)}_{\text{obstructions}}
= 0
\end{equation*}
by Serre duality. Our aim is to ``count $M$'', even if the prediction fails,
so that $M$ has positive dimension. The number arrived at is the
Donaldson--Thomas invariant $\DT(M)$.

\subsection{Deformation invariance}\label{sec:deform}

Here is a thought model: if $M$ fails to be finite, suppose we can deform $X$ to a new
Calabi-Yau threefold $X'$ such that the corresponding moduli space $M'$ of sheaves
on $X'$ is finite. Then we want to declare that $M$ should have had
the same number of points as $M'$, but for some reason $M$ came out oversized.
So we define the virtual count $\DT(M)$ as the number of points in
$M'$. Of course we ask whether this count
is independent of the chosen deformation and, if it is, whether it can be
phrased intrinsically on $M$.
The answer is affirmative, and this intrinsically defined
invariant is the Donaldson--Thomas invariant.

This presentation is misleading in that we rarely can find a finite $M'$,
but it motivates the following demands: the virtual count should
be such that
\begin{itemize}
\item if $M$ is a finite and reduced, then $\DT(M)$ is its number of points
\item $\DT(M_t)$ is constant in (smooth) families $X_t$.
\end{itemize}
Note that the topological Euler characteristic does
specialize to the number of points when $M$ is finite and reduced, but it is certainly
not invariant under deformation of $X$. We return to Euler characteristics
in Section \ref{sec:weighted}.

\subsection{Virtual fundamental class}\label{sec:virtual}

The Donaldson--Thomas invariant is defined as the degree of the
\emph{virtual fundamental class} $[M]^\text{vir}$, which is a
Chow class on $M$ of dimension $d^{\text{vir}}=0$. It is in some sense
a characteristic class attached to an obstruction theory on $M$.
The same construction underlies Gromov--Witten invariants,
where $M$ is instead a moduli space for stable maps (whose virtual
fundamental class has positive dimension).
This machinery was first developed by Li--Tian \cite{LT98}; our presentation
follows Behrend--Fantechi \cite{BF97}, and is also influenced by Siebert \cite{siebert2004}.

Here is a toy model (cf.\ \cite[Section 3]{thomas2000}), which serves as a guide for the actual construction.
Suppose the moduli space comes out naturally as the zero locus $M = Z(s)$
of a section $s\in \Gamma(V, E)$ of a vector bundle on a smooth variety $V$.
Then the expected dimension of $M$ is
\begin{equation*}
d^{\text{vir}} = \dim V - \rk E
\end{equation*}
and if this is indeed the dimension of $M$, then its fundamental class
is $c_{\text{top}}(E)$. But in any case, there is the localized top
Chern class $\ZZ(s)$ \cite[Section 14.1]{fulton98}, which is a degree
$d^{\text{vir}}$ class in the Chow group of $M$. This should be our
$[M]^\text{vir}$.

The section $s$ embeds $M$ into $E$; recall that deformation to
the normal cone in this context \cite[Remark 5.1.1]{fulton98} says that
as $\lambda\to \infty$, the locus $\lambda s(M)\subset E$ becomes
the normal cone $C_{M/V}\subset \res{E}{M}$.
The localized top Chern class $\ZZ(s)$
is the (refined) intersection of $C_{M/V}$ with the zero section
of $\res{E}{M}$. Thus we can forget about $V$: all we need to be
able to
write down our toy virtual fundamental class $[M]^\text{vir} = \ZZ(s)$ is a cone ($C_{M/V}$) in a vector
bundle ($\res{E}{M}$) on $M$.
It turns out that the normal cone $C_{M/V}$, or at least an essential
part of it, is in some sense intrinsic to $M$, whereas an embedding
into a vector bundle is a (perfect) obstruction theory on $M$.

Now return to the actual moduli space $M$: choose an embedding $M\subset V$
into a smooth variety $V$ (our $M$ is projective, so we may take $V=\PP^n$).
Let $\sheaf{I}\subset \OO_V$ be the ideal of the embedding.
The natural map
\begin{equation}\label{eq:cot}
L_M\colon \sheaf{I}/\sheaf{I}^2 \to \res{\Omega_V}{M},
\end{equation}
considered as a complex with objects in degrees $-1$ and $0$, is the truncated
cotangent complex for $M$.
We define
\begin{align*}
T_V &= \Spec \Sym \Omega_V,\\
N_{M/V} &= \Spec \Sym \sheaf{I}/\sheaf{I}^2,\\
C_{M/V} &= \Spec \textstyle\bigoplus \sheaf{I}^d/\sheaf{I}^{d+1}
\end{align*}
(the first two are vector space fibrations, with possibly varying
fibre dimensions, and the last one is a cone fibration)
so \eqref{eq:cot} gives a map
\begin{equation}\label{eq:normal}
\res{T_V}{M} \to N_{M/V}
\end{equation}
and there is an embedding $C_{M/V}\subseteq N_{M/V}$.
Now Behrend--Fantechi define stack quotients
\begin{align*}
N_M &= [N_{M/V} / \res{T_V}{M}], &
C_M &= [C_{M/V} / \res{T_V}{M}]
\end{align*}
and prove that they are independent of $V$. They are the
\emph{intrinsic normal space} and \emph{intrinsic normal cone}.
The reader not comfortable with stacks can safely view $N_M$
as the map \eqref{eq:normal} between vector space fibrations,
modulo some equivalence relation, and similarly for $C_M$
(this viewpoint is carried further by Siebert \cite{siebert2004}).
But it is also useful for intuition to think of them as somewhat weird vector space and
cone fibrations over $M$. For instance, the fibre of $N_M$ over
a smooth point $p\in M$ is the trivial vector space together with the
stabilizer group $T_M(p)$.

So we have the intrinsically defined (stacky) normal cone $C_M$ on $M$,
embedded into $N_M$. But the latter is a (stacky) vector space fibration,
which is not necessarily locally free. So the lacking piece of data
is an embedding of $N_M$ into a vector bundle. We enlarge our notion
of vector bundles to allow stack quotients $E = [E^1/E^0]$, where
$E^0\to E^1$ is a linear map of vector bundles on $M$. Ad hoc, we
define a map $f\colon N_M\subset E$ to be something induced by a commutative
square
\begin{equation}\label{eq:N-embed}
\begin{diagram}
\res{T_M}{V} & \rTo & N_{M/V} \\
\dTo^{f^0} && \dTo^{f^1} \\
E^0 & \rTo & E^1
\end{diagram}
\end{equation}
and to count as an embedding, the map $f^1$ should
take distinct $\res{T_M}{V}$-orbits in $N_{M/V}$ to distinct
$E^0$-orbits in $E^1$, i.e.~the induced map on cokernels should be
a \emph{monomorphism}. Furthermore we require that the induced
map on kernels should be an \emph{isomorphism},
so that stabilizer groups in $N_M$ and
in $E$ agree.

In summary, we want to equip the scheme $M$ with a (stacky)
vector bundle $E$ and an embedding $N_M\subset E$ of the
intrinsic normal space. Then we define the virtual fundamental class
$[M]^{\text{vir}}$ as the intersection of the cone $C_M\subset E$
with the zero section in $E$. (This does make sense on stacks
\cite{kresch99}.)

\subsection{Obstruction theory}

Obstruction theory is a systematic answer to the problem of extending a morphism
$f\colon T \to M$ to an infinitesimal thickening $T \subset \overline{T}$, where
the ideal $\sheaf{I}\subset \OO_{\overline{T}}$ of $T$ has square zero. We may and will
assume that $T$ and $\overline{T}$ are affine. To each such situation,
there is a canonical obstruction class \cite[Section 4]{BF97}
\begin{equation*}
\omega \in \Ext^1_T(f^*L_M, \sheaf{I})
\end{equation*}
whose vanishing is equivalent to the existence of a morphism $\overline{f}\colon \overline{T}\to M$
extending $T$. Moreover, when $\omega=0$, the set of such extensions $\overline{f}$ form in a natural
way a torsor under $\Hom_T(f^*L_M, \sheaf{I})$.
These statements, if unfamiliar to the reader, may be taken on trust for the purposes
of this text.

Obstruction theory is connected with the construction of virtual fundamental
classes as follows:
diagram \eqref{eq:N-embed} is obtained by applying $\Spec \Sym (-)$ to
the dual diagram of coherent sheaves
\begin{equation*}
\begin{diagram}
\res{\Omega_M}{V} & \lTo & \sheaf{I}/\sheaf{I}^2 \\
\uTo^{\phi^0} && \uTo^{\phi^{-1}} \\
\sheaf{E}^0 & \lTo & \sheaf{E}^{-1}
\end{diagram}
\end{equation*}
i.e.~a morphism of complexes
\begin{equation*}
\phi\colon \sheaf{E} \to L_M
\end{equation*}
(to be precise, this happens in the derived category, so quasi-iso\-mor\-phisms
are inverted).

\begin{theorem}[Behrend--Fantechi \cite{BF97}]\label{thm:obstr}
Let $\sheaf{E}$ be a complex of locally free sheaves concentrated in nonpositive
degrees and let
$\phi\colon \sheaf{E} \to L_M$ be a morphism (in the derived category).
Then the following are equivalent:
\begin{itemize}
\item[(i)] For each deformation situation $T\subset \overline{T}$, $f\colon T\to M$,
there is a morphism $\overline{f}\colon \overline{T}\to M$ extending $f$ if and only if
\begin{equation*}
\phi^*(\omega) \in \Ext^1_T(f^*\sheaf{E}, \sheaf{I})
\end{equation*}
vanishes; furthermore when $\phi^*(\omega)=0$, the set of such extensions $\overline{f}$
form a torsor under $\Hom_T(f^*\sheaf{E}, \sheaf{I})$.
\item[(ii)] The morphism $\phi$ induces an isomorphism in degree $0$ and an epimorphism
in degree $-1$.
\end{itemize}
\end{theorem}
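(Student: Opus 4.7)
My plan is to reformulate both conditions in terms of the cone $C$ of $\phi$ in the derived category. The distinguished triangle $\sheaf{E}\to L_M \to C \to \sheaf{E}[1]$ and its long exact sequence of cohomology sheaves show that condition (ii) is equivalent to $H^0(C)=0$ and $H^{-1}(C)=0$, i.e.\ $C$ is cohomologically concentrated in degrees $\leq -2$.

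For (ii)$\Rightarrow$(i) the strategy is to apply $R\Hom_T(f^*(-),\sheaf{I})$ to the pulled-back triangle. Because $C$ has cohomology only in degrees $\leq -2$, so does $f^*C$ (derived pullback preserves upper bounds on cohomology), and the spectral sequence $E_2^{p,q}=\Ext^p(H^{-q}(f^*C),\sheaf{I})\Rightarrow \Ext^{p+q}(f^*C,\sheaf{I})$ gives $\Ext^0_T(f^*C,\sheaf{I})=\Ext^1_T(f^*C,\sheaf{I})=0$. Feeding these vanishings into the long exact sequence of Ext groups shows that $\phi^*$ is a bijection on $\Hom_T(-,\sheaf{I})$ and an injection on $\Ext^1_T(-,\sheaf{I})$. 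The injection yields the biconditional $\phi^*(\omega)=0 \iff \omega=0$, while the bijection transports the canonical $\Hom_T(f^*L_M,\sheaf{I})$-torsor structure on extensions $\overline{f}$ to the $\Hom_T(f^*\sheaf{E},\sheaf{I})$-torsor demanded by (i).

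For (i)$\Rightarrow$(ii) I specialise the hypothesis to $T=M$, $f=\mathrm{id}_M$, using that square-zero thickenings $\overline{M}\supset M$ with ideal $\sheaf{I}$ are classified by $\Ext^1(L_M,\sheaf{I})$ and that the class of $\overline{M}$ is precisely the obstruction to extending $\mathrm{id}_M$ along $\overline{M}\to M$. Applied to the trivial thickening $\overline{M}=M[\sheaf{I}]$, the torsor compatibility in (i) gives an isomorphism $\phi^*\colon \Hom(L_M,\sheaf{I})\isoto \Hom(\sheaf{E},\sheaf{I})$; applied to arbitrary thickenings and varying $\overline{M}$ over all classes in $\Ext^1(L_M,\sheaf{I})$, the obstruction biconditional in (i) gives an injection $\phi^*\colon \Ext^1(L_M,\sheaf{I})\hookrightarrow \Ext^1(\sheaf{E},\sheaf{I})$---both valid for every coherent sheaf $\sheaf{I}$. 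Plugged into the long exact sequence from $C$, these force $\Hom(C,\sheaf{I})=\Ext^1(C,\sheaf{I})=0$ for all $\sheaf{I}$. Since $C$ is represented by a complex in degrees $[-2,0]$, one has $\Hom(C,\sheaf{I})\iso\Hom(H^0(C),\sheaf{I})$, so Yoneda yields $H^0(C)=0$; once this holds, $\Ext^1(C,\sheaf{I})\iso \Hom(H^{-1}(C),\sheaf{I})$, whence $H^{-1}(C)=0$ by the same argument.

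The main obstacle will be the backward direction, and in particular the classification statement that every class of $\Ext^1(L_M,\sheaf{I})$ is actually realised as the obstruction of an honest square-zero extension of $M$ by $\sheaf{I}$. This is the essential input from the cotangent-complex formalism; once granted, everything else reduces to mechanical bookkeeping with the distinguished triangle defining $C$.
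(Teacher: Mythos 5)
The paper does not actually prove this theorem: it is quoted from Behrend--Fantechi \cite{BF97}, so there is no in-text argument to compare against. Your proposal is essentially the proof given in \cite{BF97}: translate (ii) into the vanishing of $H^0$ and $H^{-1}$ of the cone $C$ of $\phi$, deduce (i) from the resulting bijection on $\Hom(-,\sheaf{I})$ and injection on $\Ext^1(-,\sheaf{I})$ applied to the canonical obstruction class, and for the converse feed the ``universal'' deformation situations (the trivial extension and arbitrary square-zero extensions) back into the same exact sequence, finishing with Yoneda. Both directions are sound, but two points need attention. First, the deformation situations in (i) are required to have $T$ affine (the paper says so explicitly), so you cannot literally take $T=M$ with $M$ projective; since (ii) is local on $M$, the fix is to run your argument on an affine open cover, using that a morphism $\overline{U}\to M$ extending an open immersion $U\hookrightarrow M$ is the same as a retraction of $U\subset\overline{U}$, whose existence is governed by the class of $\overline{U}$ in $\Ext^1(L_U,\sheaf{I})$. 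Second, for the $\Hom$ half of the converse you must read (i) as asserting that the $\Hom_T(f^*\sheaf{E},\sheaf{I})$-torsor structure is compatible, via $\phi^*$, with the canonical $\Hom_T(f^*L_M,\sheaf{I})$-torsor structure (as it is in \cite{BF97}); two groups acting simply transitively on the same nonempty set need not be identified by a given homomorphism. The external input you flag, namely that every class in $\Ext^1(L_M,\sheaf{I})$ is realized by an honest square-zero extension ($\operatorname{Exal}(M,\sheaf{I})\iso\Ext^1(L_M,\sheaf{I})$), is indeed the substantive ingredient from the cotangent-complex formalism, and is of the same nature as the properties of the canonical obstruction class that the paper already takes on trust. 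A last cosmetic point: $C$ need not be concentrated in degrees $[-2,0]$ when $\sheaf{E}$ has more than two terms, but $C\in D^{\le 0}$ is all the Yoneda step requires.
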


The condition that $f$ in Diagram \eqref{eq:N-embed} induces an isomorphism
on kernels and a monomorphism on cokernels translates precisely to the condition on $\phi$
in (ii) in the theorem. Thus an embedding $N_M\subset E$ is equivalent to an obstruction
theory of a particular kind:

\begin{definition}[Behrend--Fantechi \cite{BF97}]
A \emph{perfect obstruction theory} on $M$ is a two term complex of locally free sheaves
$\sheaf{E}$ together with a morphism $\phi\colon \sheaf{E}\to L_M$,
such that $\phi$ induces an isomorphism in degree $0$ and an epimorphism
in degree $-1$.
\end{definition}

The point is, of course, that our moduli space $M$, or rather the subspace $M(\sheaf{L})\subset M$
of sheaves with fixed determinant line bundle $\sheaf{L}$, carries a natural perfect obstruction theory.
To construct the obstruction theory,
we assume the rank $r$ is nonzero, and consider the trace map
\begin{equation*}
\tr\colon \HHom(\sheaf{F},\sheaf{F}) \to \OO_{X\times M}
\end{equation*}
(do this in the derived category, so $\HHom$ means derived $\HHom$;
if $\sheaf{F}$ is locally free it doesn't matter, of course).
Let $\sheaf{F}_T$ be the sheaf on $T\times X$ obtained by pulling back the universal
family $\sheaf{F}$ along $f\colon T \to M(\sheaf{L})$. To extend $f$ to $\overline{T}$ is the same
as to extend $\sheaf{F}_T$ to a $\overline{T}$-flat family on $\overline{T}\times X$ with constant determinant.
The trace map induces
\begin{equation*}
\tr^i\colon \Ext^i(\sheaf{F}_T, \sheaf{F}_T\tensor_{\OO_T} \sheaf{I}) \to H^i(\sheaf{I}).
\end{equation*}
We will use a subscript $0$ on $\HHom$ and $\Ext^i$ to indicate the kernels of
$\tr$ and $\tr^i$.
By reasonably elementary arguments (see e.g.\ Thomas \cite{thomas2000}), the existence
of such an extension is equivalent to the vanishing of a certain class
\begin{equation}\label{eq:ext2}
\omega \in \Ext^2_0(\sheaf{F}_T, \sheaf{F}_T\tensor_{\OO_T} \sheaf{I}).
\end{equation}
Moreover, when $\omega=0$, the set of extensions of $\sheaf{F}_T$ form a torsor
under
\begin{equation}\label{eq:ext1}
\Ext^1_0(\sheaf{F}_T, \sheaf{F}_T\tensor_{\OO_T} \sheaf{I}).
\end{equation}
This elementary obstruction theory can be lifted to a Behrend--Fantechi type
theory, and for this step we will be brief: 
the diagonal map $\OO_{X\times M} \to \HHom(\sheaf{F},\sheaf{F})$
composed with the trace map is multiplication by the rank $r$, hence there
is a splitting
\begin{equation}\label{eq:split}
\HHom(\sheaf{F},\sheaf{F}) = \HHom_0(\sheaf{F},\sheaf{F}) \oplus \OO_{X\times M}.
\end{equation}
There is a natural morphism (essentially the Atiyah class of $\sheaf{F}$ \cite{HT2010})
\begin{equation}\label{eq:obstr}
\phi\colon \sheaf{E} = (p_{2*}\HHom_0(\sheaf{F},\sheaf{F}))^\vee[-1] \to L_M
\end{equation}
(again, derived functors), whose restriction to $M(\sheaf{L})$ is a perfect
obstruction theory. In fact, there are $\OO_T$-linear isomorphisms
\begin{equation*}
\Ext^i(\sheaf{F}_T, \sheaf{F}_T\tensor_{\OO_T} \sheaf{I})
\iso
\Ext^{i-1}(f^*\sheaf{E}, \sheaf{I})
\end{equation*}
such that the obstruction class in \eqref{eq:ext2} agrees with the one in Theorem \ref{thm:obstr} (i),
and the torsor structures are the same \cite[Theorem 4.1]{HT2010}.

If we used the full $\HHom$ instead of the trace free $\HHom_0$ in \eqref{eq:obstr},
the complex $\sheaf{E}$ would be too big in two ways:
firstly, it would not be concentrated in degrees $[-1,0]$; secondly and more seriously,
even if we truncate it, it would contain a trivial summand by \eqref{eq:split}, causing the virtual
fundamental class to be zero (just as, in our toy model in Section \ref{sec:virtual}, the
top Chern class of a vector bundle with a trivial summand is zero).
This is why we are led to fixing the determinant, as the trace free part of $\HHom(\sheaf{F},\sheaf{F})$
is precisely what controls the deformation theory
for sheaves with fixed determinant.

\begin{definition}[Thomas \cite{thomas2000}, Huybrechts--Thomas \cite{HT2010}]
The Don\-ald\-son--Tho\-mas invariant $\DT(M(\sheaf{L}))$ of $M(\sheaf{L})$ is the degree of the virtual fundamental
class $[M(\sheaf{L})]^{\text{vir}}$ associated to the canonical perfect obstruction theory
\eqref{eq:obstr}.
\end{definition}

The Donaldson--Thomas invariant does fulfill our two requirements from Section \ref{sec:deform}.
Firstly, if $M(\sheaf{L})$ happens to be finite and reduced, and somewhat more generally: finite
and a complete intersection, then the truncated cotangent complex is itself
a perfect obstruction theory, and the associated virtual fundamental class
is the usual fundamental class, hence its degree is the length of $M(\sheaf{L})$ as a finite scheme.
Secondly, the obstruction theory we have sketched above generalizes to the
relative situation of a moduli space $M\to S$ for sheaves on the fibres of
a family $X\to S$ of Calabi--Yau threefolds. The relative obstruction theory gives rise to a virtual
fundamental class on the whole family, which restricts to the fibrewise virtual fundamental class.
Consequently, the degree of the virtual fundamental class is constant
among the fibres.

\subsection{Behrend's weighted Euler characteristic}\label{sec:weighted}

A priori, the Donaldson--Thomas invariant defined above may depend on
the choice of obstruction theory on $M(\sheaf{L})$. But in fact,
the invariant can be rephrased entirely in terms of the intrinsic
geometry of $M(\sheaf{L})$: the decisive property of the obstruction
theory \eqref{eq:obstr} is that it is not only of virtual dimension $0$,
but it is \emph{symmetric} \cite[Definition 1.10]{BF2008}.
Roughly speaking, symmetry is a refinement of the
property that
\begin{equation*}
\Ext^1(\sheaf{F}_p,\sheaf{F}_p) \quad\text{and}\quad\Ext^2(\sheaf{F}_p,\sheaf{F}_p)
\end{equation*}
are Serre dual (and so are the trace free versions).

Now, for any scheme $Y$, Behrend defines an integral invariant
\begin{equation*}
\nu\colon Y \to \ZZ
\end{equation*}
with the properties (among
others) that $\nu(p)$ only depends on an \'etale neighbourhood of $p\in Y$,
at smooth points $\nu = 1$, and $\nu^{-1}(n)$ is a constructible
subset for all $n\in \ZZ$.

\begin{theorem}[Behrend \cite{behrend2009}]
If $Y$ is a compact scheme with a perfect symmetric obstruction
theory, then the degree of the associated virtual fundamental class
equals the $\nu$-weighted Euler characteristic
\begin{equation*}
\tilde{\chi}(Y) = \sum_{n\in \ZZ} n \chi(\nu^{-1}(n)).
\end{equation*}
\end{theorem}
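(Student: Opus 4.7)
The plan is to follow Behrend's microlocal approach, in which both the degree of $[Y]^{\mathrm{vir}}$ and the weighted Euler characteristic $\tilde{\chi}(Y)$ are reinterpreted in terms of the intrinsic normal cone $\mathfrak{C}_Y$ from Section~\ref{sec:virtual}. The starting observation is that $\nu_Y$ admits a description through the cycle of $\mathfrak{C}_Y$: writing its cycle on $Y$ as $\sum_i m_i[C_i]$, with $Z_i \subseteq Y$ the image of $C_i$ under the projection to $Y$, one has an identity of constructible functions
\begin{equation*}
\nu_Y \;=\; \sum_i (-1)^{\dim C_i}\, m_i\, \mathrm{Eu}_{Z_i},
\end{equation*}
where $\mathrm{Eu}_{Z_i}$ denotes MacPherson's local Euler obstruction. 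The first step is to verify that this formula produces a function satisfying the abstract properties characterizing $\nu$ in the theorem statement (étale-local nature, value $1$ at smooth points, constructibility of level sets).

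The decisive input from the \emph{symmetric} obstruction theory is that $\mathfrak{C}_Y$ is \emph{Lagrangian} inside the (stacky) obstruction bundle $E$. Concretely, the self-duality $\sheaf{E} \simeq \sheaf{E}^\vee[1]$ coming from symmetry endows $E$ with a symplectic structure, with respect to which each irreducible component of $\mathfrak{C}_Y$ is conic Lagrangian. This is what puts us in the setting of MacPherson's cycle/function correspondence for Lagrangian cycles in cotangent-type bundles, and is the reason both sides of the theorem can be expressed in the same language.

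With the Lagrangian property in hand, the proof concludes via a double computation. By construction $\deg[Y]^{\mathrm{vir}}$ is the refined intersection of $\mathfrak{C}_Y$ with the zero section of $E$; for a conic Lagrangian cycle, the Kashiwara--Dubson index formula rewrites this intersection as $\sum_i (-1)^{\dim C_i} m_i \int_{Z_i} \mathrm{Eu}_{Z_i}\, d\chi$. On the other hand, integrating the above formula for $\nu_Y$ over $Y$ against the Euler characteristic measure yields $\tilde{\chi}(Y) = \int_Y \nu_Y\, d\chi = \sum_i (-1)^{\dim C_i} m_i \int_{Z_i} \mathrm{Eu}_{Z_i}\, d\chi$. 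Matching term by term gives the equality claimed in the theorem.

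The main obstacle is establishing that $\mathfrak{C}_Y$ is Lagrangian inside $E$ as a genuine consequence of the symmetry of the obstruction theory. Translating the abstract self-duality in the derived category into an honest symplectic pairing on the stacky bundle $E$, and then verifying that \emph{every} irreducible component of $\mathfrak{C}_Y$—including embedded or non-reduced ones—is conic Lagrangian with respect to it, is the technical heart of the argument and requires care with the Behrend--Fantechi intrinsic formalism. Once this Lagrangian structure is in place, the remainder of the proof is essentially a translation between virtual intersection theory and MacPherson's characteristic cycle formalism.
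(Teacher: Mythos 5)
The survey states this result as a black box, citing Behrend's paper, and gives no proof of its own, so your sketch can only be measured against Behrend's original argument --- and at the level of architecture it reproduces that argument faithfully: define $\nu_Y$ as the image under MacPherson's local Euler obstruction of a cycle extracted from the normal cone, show that for a symmetric obstruction theory this cycle, viewed in a cotangent bundle, is the characteristic cycle of a constructible function, and conclude with the Dubson--Kashiwara index theorem on one side and integration against $d\chi$ on the other. Two corrections of substance. First, the sign in your formula for $\nu_Y$ cannot be $(-1)^{\dim C_i}$: the normal cone $C_{Y/M}$ is of pure dimension $\dim M$ (and the intrinsic normal cone of pure dimension $0$), so that sign is a single constant depending on the ambient embedding and would make $\nu_Y$ ill-defined; Behrend's cycle is $\sum_i(-1)^{\dim \pi(C_i)}m_i[\pi(C_i)]$, with the sign governed by the dimension of the image $Z_i=\pi(C_i)$ in $Y$. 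Second, the symplectic structure does not live on the stacky bundle $E=[E^1/E^0]$; the self-duality $\sheaf{E}\simeq\sheaf{E}^\vee[1]$ only identifies the obstruction sheaf with $\Omega_Y$, and the Lagrangian statement is made, after choosing a closed embedding $Y\subset M$ into a smooth $M$, for a cone inside the honest cotangent bundle $\Omega_M$ with its canonical symplectic form.

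This relocation matters because it is exactly how Behrend crosses the step you defer as ``the technical heart''. Symmetry of the obstruction theory does not directly yield a Lagrangian property of the abstract intrinsic normal cone; what Behrend proves is that a scheme with a symmetric obstruction theory is, locally, the zero scheme of an \emph{almost closed} $1$-form $\omega$ on a smooth $M$ (meaning $d\omega$ vanishes along $Y$), that the relevant cone is then the flat limit $\lim_{\lambda\to\infty}\lambda\,\Gamma_\omega\subset\Omega_M$ of the scaled graphs, and that almost closedness forces every component of this limit to be a conormal variety $\overline{N^*_{Z_i/M}}$. Only then is the cycle of the cone identified (up to sign) with the characteristic cycle of $\nu_Y$, and only then do the index theorem and the identity $\deg[Y]^{\mathrm{vir}}=\int_Y\nu_Y\,d\chi$ follow. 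As written, your proposal names the right destination but omits the local model that is the only known route to it. Note also that the properties of $\nu$ listed alongside the theorem (\'etale-locality, value $1$ at smooth points, constructible level sets) do not characterize $\nu$, so your cycle formula must be taken as the definition rather than verified against them.
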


The theorem has at least three important consequences: Firstly, as promised, the Donaldson--Thomas invariant
is an intrinsic invariant of $M(\sheaf{L})$. Secondly, the weighted
Euler characteristic is directly accessible for computation in examples \cite{BF2008}.
But deformation invariance does not follow from the weighted Euler characteristic formulation;
this is a consequence of the virtual fundamental class machinery.
So, for instance, in the presence of strictly semi-stable sheaves,
one might attempt to define generalized Donaldson--Thomas invariants
as the weighted Euler characteristic of either the non-compact moduli space $M(\sheaf{L})$ or the compactified
moduli space $\overline{M}(\sheaf{L})$ for semi-stable sheaves,
but these numbers would not be deformation invariant.
Still, and this is the third consequence, Behrend's weighted Euler characteristic
is the starting point for Joyce--Song's \cite{JS2011} correct (i.e.\ deformation invariant) and somewhat mysterious way of
counting strictly semi-stable sheaves (see also Kontsevich--Soibelman \cite{KS2008}). In a nontrivial manner, these generalized Donaldson--Thomas
invariants take into account all ways of putting together stable (Jordan--H\"older) factors
to form semi-stable sheaves as iterated extensions.
At present, this theory only covers the situation where $H^1(\OO_X)=0$, so
that line bundles do not deform.

\section{Abelian threefolds}\label{sec:abelian}

Let $X$ be an abelian threefold. The theory outlined in the first part of this text
applies, but almost always results in vanishing Donaldson--Thomas invariants.
We will investigate why this is so, and how the setup can be adjusted to give nontrivial
invariants.

The Chern character of the sheaves parametrized by $M$ will be written
\begin{equation}\label{eq:chern}
\ch = r + c_1 + \gamma + \chi
\end{equation}
where $r$ and $\chi$ are integers, $c_1$ is a divisor class and $\gamma$ is a curve class.

\subsection{Determinants}

Let us apply Behrend's weighted Euler characteristic to see, in concrete terms,
why the Donaldson--Thomas invariant of the full moduli space $M$ is zero. Afterwards,
we shall see that restricting to $M(\sheaf{L})$ does not help.

Assume the rank $r$ is nonzero.
Let $\delta\colon M \to \Pic^{c_1}(X) \iso \widehat{X}$ be the morphism that sends
a sheaf $\sheaf{F}_p$ to its determinant line bundle $\det(\sheaf{F}_p)$. Then $\delta$
is surjective, in fact all fibres $M(\sheaf{L}) = \delta^{-1}(\sheaf{L})$ are isomorphic. This
can be seen by letting $\widehat{X} = \Pic^0(X)$ act on $M$ by twist:
\begin{equation*}
\widehat{X}\times M \to M,\quad (\xi,\sheaf{F}) \mapsto \sheaf{F}\tensor\sheaf{P}_\xi
\end{equation*}
where we write $\sheaf{P}_\xi$ for the invertible sheaf corresponding to $\xi\in\widehat{X}$.
Since
\begin{equation*}
\det(\sheaf{F}\tensor\sheaf{P}_\xi) = \det(\sheaf{F}) \tensor \sheaf{P}_{r\xi},
\end{equation*}
it follows that every orbit in $M$ surjects onto $\Pic^{c_1}(X)$, and every fibre
of $\delta$ can be moved to any other fibre by the action of some element $\xi\in\widehat{X}$.

The topological Euler characteristic of $M$ thus equals the product of the Euler characteristics
of a fibre $M(\sheaf{L})$ and the base $\widehat{X}$, but the latter has Euler characteristic zero.
Thus $\chi(M) = 0$. Via a stratification, the same argument works for Behrend's weighted
Euler characteristic: write $M = \bigcup_n M_n$ where $M_n\subset M$ is the constructible
subset $\nu^{-1}(n)$. Each $M_n$ is invariant under the $\widehat{X}$-action, so for all $n$,
$\chi(M_n)$ equals the product of the Euler characteristic of a fibre $M_n\cap M(\sheaf{L})$
and the base $\widehat{X}$, hence is zero. Thus
\begin{equation*}
\tilde{\chi}(M) = \sum_n n \chi(M_n) = 0.
\end{equation*}

This argument applies to any, not necessarily abelian, $X$. But in the abelian case,
the weighted Euler characteristic of $M(\sheaf{L})$ is usually zero, too.
We look at an example before handling the general situation.

\begin{example}\label{ex:hilbert}
Let $\Hilb^n(X)$ be the Hilbert scheme of finite subschemes $Z\subset X$ of length $n$.
By associating with $Z$ its ideal $\sheaf{I}_Z$, we view the Hilbert scheme as
a moduli space for rank $1$ sheaves. These sheaves may be deformed either
by moving $Z$ around, or by twisting with invertible sheaves in $\Pic^0(X)$,
so the full moduli space is $M = \widehat{X}\times\Hilb^n(X)$, first projection
is the determinant map $M\to \widehat{X}$, and
\begin{equation*}
M(\OO_X) = \Hilb^n(X)
\end{equation*}
is a moduli space for rank $1$ sheaves with fixed determinant $\OO_X$.
Writing $\sum Z$ for the sum under the group law on $X$, of the zero cycle
underlying $Z$, we find a second fibration:
\begin{equation}\label{eq:sum}
\Hilb^n(X) \to X, \quad Z \mapsto \tsum Z
\end{equation}
By translation with elements $x\in X$, any fibre can be moved to any other fibre,
so that by repeating the argument above, we conclude that Behrend's weighted Euler characteristic
of $\Hilb^n(X)$ is zero.
\end{example}

The ``second fibration'' \eqref{eq:sum} on the Hilbert scheme generalizes
as follows:
let $\widehat{\delta}\colon M\to X$
be the morphism that takes a sheaf $\sheaf{F}$ to the determinant 
of its Fourier--Mukai transform $\widehat{\sheaf{F}}$ (we should warn the
reader that in the literature,
the notation $\widehat{\sheaf{F}}$ is usually reserved for WIT-sheaves \cite[Definition 2.3]{mukai81}; our $\sheaf{F}$ may well be a complex).
The invertible sheaf $\det(\widehat{\sheaf{F}})$ will be called the \emph{codeterminant} of $\sheaf{F}$.
It belongs to some component of $\Pic(\widehat{X})$,
which we identify with $\Pic^0(\widehat{X}) = X$. In general, there is no relation between
the determinant and the codeterminant. More precisely, let $X\times\widehat{X}$
act on $M$ by translation and twist:
\begin{equation*}
(X\times\widehat{X})\times M \to M,\quad (x,\xi;\sheaf{F}) \mapsto T_{-x}^*\sheaf{F}\tensor\sheaf{P}_\xi
\end{equation*}
Write $\phi_{c_1}\colon X\to \widehat{X}$
for the homomorphism $x\mapsto \OO_X(T_x^*D - D)$, for any divisor $D$ representing $c_1$.
Via Poincar\'e duality, the curve class $\gamma$ in \eqref{eq:chern}
corresponds to a divisor class on $\widehat{X}$ \cite[Proposition 1.17]{mukai87}; we shall write $\psi_\gamma\colon \widehat{X} \to \widehathat{X}=X$ for the associated homomorphism.
With this notation, the action of $X\times\widehat{X}$ on a fixed sheaf $\sheaf{F}\in M$, composed
with the determinant\slash codeterminant $(\widehat{\delta},\delta)\colon M\to X\times\widehat{X}$ is
easily computed \cite[Proposition 2.2]{gulbrandsen2011b}: it is
\begin{equation}\label{eq:isogeny}
\begin{pmatrix}
\chi & -\psi_{\gamma} \\
-\phi_{c_1} & r
\end{pmatrix} \in \End(X\times\widehat{X}).
\end{equation}
Thus the condition that the rank $r$ is nonzero, that we used to ensure that $\delta$
was a fibration, is now replaced by the condition that \emph{this matrix is an isogeny}.
Then $X\times\widehat{X}$-orbits
in $M$ surjects onto $X\times\widehat{X}$ via $(\widehat{\delta},\delta)$, all fibres are
isomorphic and the weighted Euler characteristic is zero. Fixing just one determinant does not
help: we need to fix both to obtain a nontrivial invariant.

Our object is thus the fibre $M(\sheaf{L}',\sheaf{L}) \subset M$ of $(\widehat{\delta},\delta)$,
parametrizing sheaves with determinant $\sheaf{L}$ and codeterminant $\sheaf{L}'$.
Examples indicate that its weighted Euler characteristic is nonzero in general, so
there are no further fibrations, which is a good thing, since we have run out of group
actions on $M$.

\begin{proposition}
Suppose $X$ has Picard number $1$. Then the matrix \eqref{eq:isogeny} is an isogeny
if and only if
\begin{equation*}
3r\chi \ne c_1 \gamma.
\end{equation*}
\end{proposition}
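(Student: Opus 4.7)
The plan is to compute the degree of the endomorphism $A$ of $X\times\widehat{X}$ given by \eqref{eq:isogeny}, recalling that $A$ is an isogeny iff this degree is nonzero, and the degree is the determinant of the action of $A$ on the rational $H_1$.

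First I would perform a block-matrix reduction. Setting $V = H_1(X,\mathbb{Q})$ (of dimension $6$) and identifying $H_1(\widehat{X},\mathbb{Q})$ with $V^*$, the action of $A$ on $V\oplus V^*$ is a $12\times 12$ matrix with scalar diagonal blocks $\chi\cdot I_V$ and $r\cdot I_{V^*}$. Since these scalar blocks commute with everything, the standard block-determinant formula (immediate for $r\neq 0$ via the Schur complement, then extended to all $r$ by polynomial continuity) gives
\begin{equation*}
\deg A \;=\; \det\bigl(r\chi\cdot I_V - \Psi\Phi\bigr),
\end{equation*}
where $\Phi$ and $\Psi$ denote the maps on $H_1$ induced by $\phi_{c_1}$ and $\psi_\gamma$. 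Equivalently, $A$ is an isogeny iff the endomorphism $r\chi\cdot\mathrm{id}_X - \psi_\gamma\phi_{c_1}$ of $X$ is one.

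Second, I would use the Picard-one hypothesis to evaluate $\psi_\gamma\circ\phi_{c_1}\in\End(X)\otimes\mathbb{Q}$ as a scalar. Let $H$ be an ample generator of $\mathrm{NS}(X)_\mathbb{Q}$; write $c_1 = cH$ and $\gamma = g\cdot H^2/2$ for $c,g\in\mathbb{Q}$ (using that algebraic curve classes are likewise one-dimensional under Picard one). The pivotal identity is
\begin{equation*}
\psi_{H^2/2} \;=\; \chi(H)\cdot\phi_H^{-1} \quad\text{in}\quad \Hom(\widehat{X},X)\otimes\mathbb{Q},
\end{equation*}
which encodes that the curve class $H^2/2$ on $X$ corresponds under Fourier--Mukai to the dual polarization $\widehat{H}\in\mathrm{NS}(\widehat{X})$, combined with the defining relation $\phi_{\widehat{H}}\circ\phi_H = \chi(H)\cdot\mathrm{id}_X$ of the dual polarization. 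Granting this, $\psi_\gamma\phi_{c_1} = cg\chi(H)\cdot\mathrm{id}_X$, and since $c_1\gamma = cg\cdot H^3/2 = 3cg\chi(H)$ (using $H^3 = 6\chi(H)$), this scalar is precisely $c_1\gamma/3$. Therefore $\deg A = (r\chi - c_1\gamma/3)^6$, which vanishes exactly when $3r\chi = c_1\gamma$.

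The main obstacle is the Fourier--Mukai identification $\psi_{H^2/2} = \chi(H)\phi_H^{-1}$, which amounts to tracing how the Chern character transforms under the Fourier--Mukai functor via the Poincar\'e bundle on $X\times\widehat{X}$. A secondary point is verifying that Picard number one is enough to force $\psi_\gamma\phi_{c_1}$ to act as a scalar on $V$; this is automatic in the generic case $\End^0(X) = \mathbb{Q}$, and alternatively one can extract the scalar from a trace computation on $H_1$ that expresses $\mathrm{tr}(\psi_\gamma\phi_{c_1})$ directly in terms of the intersection number $c_1\cdot\gamma$.
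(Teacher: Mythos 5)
The paper offers no proof of this proposition; it defers entirely to \cite[Lemma 2.3]{gulbrandsen2011b}, whose general statement (without the Picard number restriction) is precisely the intermediate criterion you arrive at, namely that \eqref{eq:isogeny} is an isogeny if and only if $r\chi\cdot\mathrm{id}_X - \psi_\gamma\phi_{c_1}$ is an isogeny of $X$. Your two-step structure --- (a) pass to $H_1(X\times\widehat{X},\QQ)=V\oplus V^*$ and use the Schur complement against the scalar block $r\cdot I_{V^*}$ to get $\deg A=\det(r\chi\, I_V-\Psi\Phi)$, then (b) use Picard number one to evaluate $\psi_\gamma\phi_{c_1}$ as the scalar $c_1\gamma/3$ --- is sound and, as far as one can reconstruct, is essentially the argument of the cited lemma. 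Step (a) is complete as written (the extension from $r\neq 0$ to all $r$ by polynomiality is fine, and the signs on the off-diagonal blocks cancel in $\Psi\Phi$).

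The one place where real content is asserted rather than proved is the identity $\psi_{H^2/2}=\chi(H)\,\phi_H^{-1}$ in $\Hom(\widehat{X},X)\tensor\QQ$, which you correctly flag as the main obstacle. Be aware that this is not merely a normalization to be checked ``up to sign'': the final criterion $3r\chi\ne c_1\gamma$ is not invariant under $\gamma\mapsto-\gamma$, so the sign in Mukai's correspondence $H^{2g-2}(X)\iso H^2(\widehat{X})$ genuinely enters the statement and must be pinned down against \cite[Proposition 1.17]{mukai87}. A clean way to close both this gap and your ``secondary point'' simultaneously is the route you mention in passing: since $\phi_{c_1}$ and $\psi_\gamma$ are symmetric homomorphisms, Picard number one forces $\phi_H^{-1}\psi_\gamma\phi_{c_1}\phi_H^{-1}\in\End(X)\tensor\QQ$ (hence $\psi_\gamma\phi_{c_1}$ itself) to be a scalar, and the trace identity $\tr_{H_1}(\psi_\gamma\phi_{c_1})=2\,(c_1\cdot\gamma)$ on the six-dimensional space $V$ then yields the scalar $c_1\gamma/3$ with its sign, bypassing the Fourier--Mukai bookkeeping entirely. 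With that substitution your argument is complete; as written, it is a correct skeleton with one load-bearing identity left unverified.
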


See \cite[Lemma 2.3]{gulbrandsen2011b} for a statement without the Picard number
restriction, and proof.

The proposition shows that our isogeny condition is satisfied for almost all choices of Chern classes.
If the condition does fail, as it does for instance for rank $2$ vector bundles $\sheaf{F}$
with $c_1(\sheaf{F}) = 0$, we may replace $\sheaf{F}$ with $\sheaf{F}(H)$ and try again.
One can show \cite[Proposition 3.5]{gulbrandsen2011b} that by such tricks (to be made precise in \ref{sec:K}), the inequality $3r\chi \ne c_1\gamma$ can always
be forced to hold, with the sole exception of Mukai's semi-homogeneous sheaves,
whose moduli spaces are fully understood anyway \cite{mukai78}.

This observation points to an arbitrariness in the definition of the
determinant\slash codeterminant map, which changes when $\sheaf{F}$ is
replaced by $\sheaf{F}(H)$, since the
Fourier--Mukai transform does not preserve tensor product.
We will fix this arbitrariness in Section \ref{sec:K}.
This can be contrasted with the situation for abelian surfaces \cite{yoshioka2001},
where the determinant/codeterminant pair is just the Albanese map of the moduli space $M$,
and hence is entirely intrinsic. For abelian threefolds, the moduli space $M$ is not,
in general, fibred over its Albanese:

\begin{example}
Let $C\subset X$ be a non hyperelliptic genus $3$ curve, embedded into its Jacobian
by an Abel-Jacobi map. Any deformation of $C$ is a translate $T_x(C)$ by some point $x\in X$,
and in fact the Hilbert scheme component containing $C$ is isomorphic to $X$ \cite{LS2004}.
Now consider the Hilbert scheme component $H$ parametrizing translations of $C$
together with a possibly embedded point. As in Example \ref{ex:hilbert},
$H$ can be viewed as a moduli space for rank $1$ sheaves with fixed determinant. There is a map
\begin{equation*}
H \to X^2
\end{equation*}
sending a point $T_x(C) \cup y$ in $H$ to the pair $(x,y)$. This is clearly the Albanese
map, and it is generically bijective. However, the fibre over a pair $(x,y)$ for which
$y \in T_x(C)$, consists of all embedded points in $T_x(C)$ supported at $y$, hence
is a $\PP^1$. In particular, the Albanese fibres of $H$ are not isomorphic.
\end{example}

\subsection{Translation and twist}\label{sec:K}

We assume that the matrix \eqref{eq:isogeny} is an isogeny throughout this section.

The fibres $M(\sheaf{L}',\sheaf{L})$ of $(\widehat{\delta},\delta)$ 
intersect each $X\times\widehat{X}$-orbit in finitely many points. The canonical
object lurking here is the quotient space
\begin{equation*}
K = M/X\times\widehat{X} = M(\sheaf{L}',\sheaf{L})/G
\end{equation*}
(which we consider as a Deligne--Mumford stack), where $G$ is the (finite) kernel
of the isogeny \eqref{eq:isogeny}. The point is that the ``tricks'' we alluded to above,
such as twisting with a divisor, preserves the $X\times\widehat{X}$-action on $M$,
so that $K$ does not change. More generally, suppose $Y$ is a second abelian threefold
and there exists a derived equivalence $F\colon D(X) \isoto D(Y)$. Then we may equally well
consider $M$ as a moduli space for sheaves $\sheaf{F}$ on $X$ or as a moduli space
for $F(\sheaf{F})$ on $Y$ (this may be a complex, and not a sheaf,
and for this reason we work with complexes from the start in \cite{gulbrandsen2011b}).
Orlov shows that there is an induced isomorphism $X\times\widehat{X} \iso Y\times \widehat{Y}$
such that the two actions on $M$ are compatible \cite[Corollary 2.13]{orlov2002},
so $M/X\times\widehat{X} \iso M/Y\times \widehat{Y}$. In this sense, the space $K$ is invariant
under derived equivalence, although $M(\sheaf{L}',\sheaf{L})$ is not.

Although weighted Euler characteristics of $M(\sheaf{L}',\sheaf{L})$
and $K$ make sense, it is not clear that they are of interest (in particular, whether they are
invariant under deformation of $X$) unless there is an underlying perfect
obstruction theory.

\begin{theorem}\label{thm:obstr-ab}
There is a perfect symmetric obstruction theory on $M(\sheaf{L}',\sheaf{L})$.
\end{theorem}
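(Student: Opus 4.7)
The plan is to modify \eqref{eq:obstr} by cutting down once more, so as to reflect the additional condition that the codeterminant $\det(\widehat{\sheaf{F}})$ is also fixed, parallel to the way the determinant is handled. Recall that the splitting \eqref{eq:split} is what allows passage from the degenerate obstruction theory on $M$ to a perfect one on $M(\sheaf{L})$, by removing the tangent and obstruction directions arising from variation of the determinant. In the same spirit I would introduce a ``codeterminant trace'' $\widehat{\tr}$, obtained from the trace map on $\HHom(\widehat{\sheaf{F}},\widehat{\sheaf{F}})$ via the Fourier--Mukai equivalence; its degree-one part on $\Ext^1$ is precisely the derivative of $\widehat{\delta}\colon M\to X$.

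The first step is to verify that, under the isogeny hypothesis on \eqref{eq:isogeny}, the combined map $(\tr,\widehat{\tr})$ admits a canonical splitting of $p_{2*}\HHom(\sheaf{F},\sheaf{F})$ into the kernel $p_{2*}\HHom_{00}(\sheaf{F},\sheaf{F})$ plus two trace-like summands. The key observation is that composing $(\widehat{\tr}^1,\tr^1)$ on $\Ext^1(\sheaf{F}_p,\sheaf{F}_p)$ with the infinitesimal action of $X\times\widehat{X}$ on $M$ gives precisely the endomorphism \eqref{eq:isogeny}, which is an isomorphism on tangent spaces under the isogeny hypothesis. Setting $\sheaf{E} = (p_{2*}\HHom_{00}(\sheaf{F},\sheaf{F}))^{\vee}[-1]$ and composing the Atiyah morphism \eqref{eq:obstr} with the projection onto $\HHom_{00}$ then produces $\phi\colon \sheaf{E}\to L_{M(\sheaf{L}',\sheaf{L})}$. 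That $\phi$ is a perfect obstruction theory is checked by rerunning \eqref{eq:ext2}--\eqref{eq:ext1}: deformations $\sheaf{F}_T$ preserving both determinants are obstructed by a class in the common kernel of $\tr^2$ and $\widehat{\tr}^2$, and the extensions form a torsor under the analogous kernel in $\Ext^1$, so the comparison \cite[Theorem 4.1]{HT2010} applies with $\HHom_{00}$ in place of $\HHom_0$. Symmetry of $\phi$ is inherited from the self-duality of the full $\HHom$ on the Calabi--Yau threefold, provided that Serre duality respects the bi-trace decomposition; this should follow from the compatibility of Fourier--Mukai with Serre duality on $X$ and $\widehat{X}$.

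The main obstacle is the first step: constructing $\widehat{\tr}$ intrinsically, and verifying that $(\tr,\widehat{\tr})$ really splits off in families, so that $\sheaf{E}$ is genuinely a two-term complex of locally free sheaves on $M(\sheaf{L}',\sheaf{L})$. The isogeny hypothesis on \eqref{eq:isogeny} is indispensable here: without it the combined trace is not surjective on the relevant tangent cohomology and the splitting fails, exactly matching the breakdown of the $X\times\widehat{X}$-orbit analysis from Section \ref{sec:K}. The symmetry verification is the second, more technical point, but should be essentially formal once the splitting is in place.
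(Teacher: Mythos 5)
Your proposal follows essentially the same route as the paper's proof: a second trace map $\widehat{\tr}$ induced by Fourier--Mukai, a splitting of the combined trace whose existence rests on identifying its degree-one part with the derivative of the isogeny \eqref{eq:isogeny} (with the degree-two case handled by duality), and the resulting two-term trace-free summand giving the perfect symmetric obstruction theory on $M(\sheaf{L}',\sheaf{L})$ via the Huybrechts--Thomas comparison. The only cosmetic difference is that the paper performs the splitting on the truncation $\tau^{[1,2]}p_{2*}\HHom(\sheaf{F},\sheaf{F})$ rather than on the full complex, which is exactly the point you flag as needing verification.
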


\begin{proof}
We sketch the main points; the details can be found in \cite{gulbrandsen2011b}.
Consider the problem of extending $f\colon T\to M(\sheaf{L}',\sheaf{L})$
over $T\subset \overline{T}$.
The trace map on $\widehat{X}\times M$,
\begin{equation*}
\widehat{\tr}\colon \HHom(\widehat{\sheaf{F}},\widehat{\sheaf{F}}) \to \OO_{\widehat{X}\times M}
\end{equation*}
together with the Fourier--Mukai-induced isomorphism
\begin{equation}\label{eq:hom-iso}
p_{2*}\HHom(\sheaf{F},\sheaf{F}) \iso p_{2*}\HHom(\widehat{\sheaf{F}},\widehat{\sheaf{F}})
\end{equation}
(in the derived category) give new trace maps
\begin{equation*}
\widehat{\tr}^i\colon \Ext^i(\sheaf{F}_T, \sheaf{F}_T\tensor_{\OO_T}\sheaf{I}) \to H^i(\sheaf{I}).
\end{equation*}
Switching back and forth between $X$ and $\widehat{X}$ we thus see that the obstruction
class $\omega$ for extending $f$ to $\overline{T}$ is a class in $\ker(\tr^2)\cap \ker(\widehat{\tr}^2)$,
and when $\omega=0$, the set of such extensions, with
fixed determinant and codeterminant, is a torsor under $\ker(\tr^1)\cap \ker(\widehat{\tr}^1)$.

Again the setup can be lifted to a Behrend--Fantechi obstruction theory: the
two trace maps taken together and pushed down to $M$
\begin{equation}\label{eq:double-trace}
p_{2*}\HHom(\sheaf{F},\sheaf{F}) \to p_{2*}\OO_{\widehat{X}\times M} \oplus p_{2*}\OO_{X\times M}
\end{equation}
(derived functors)
is a split epimorphism in degrees $1$ and $2$: in fact, the two diagonal maps
\begin{gather*}
\OO_{X\times M} \to \HHom(\sheaf{F},\sheaf{F})\\
\OO_{\widehat{X}\times M} \to \HHom(\widehat{\sheaf{F}},\widehat{\sheaf{F}})
\end{gather*}
give, via \eqref{eq:hom-iso}, a map
\begin{equation}\label{eq:double-incl}
p_{2*}\OO_{\widehat{X}\times M} \oplus p_{2*}\OO_{X\times M}
\to p_{2*}\HHom(\sheaf{F},\sheaf{F}).
\end{equation}
The composition of \eqref{eq:double-incl} with \eqref{eq:double-trace}
realizes the splitting: in degree $1$, it is an endomorphism of
$H^1(\OO_{\widehat{X}})\oplus H^1(\OO_X)$, which is nothing
but the derivative of the isogeny \eqref{eq:isogeny} at $(0,0)\in X\times\widehat{X}$, hence an isomorphism.
By duality, the degree $2$ part is an isomorphism, too. Thus, writing $\tau^{[1,2]}$ for
the truncation of a complex to degrees $[1,2]$, we have produced a splitting
\begin{equation}\label{eq:split2}
\tau^{[1,2]}p_{2*} \HHom(\sheaf{F},\sheaf{F})
= \sheaf{E} \oplus
\tau^{[1,2]}\big(p_{2*} \OO_{\widehat{X}\times M} \oplus p_{2*} \OO_{X\times M}\big).
\end{equation}
The morphism \eqref{eq:obstr} induces
\begin{equation*}
\phi\colon \sheaf{E}^\vee[-1] \to L_M
\end{equation*}
whose restriction to $M(\sheaf{L}',\sheaf{L})$ is a perfect symmetric (via duality)
obstruction theory.
\end{proof}

The two trivial summands in \eqref{eq:split2} shows, in terms of the virtual fundamental
class machinery, why it is not enough to fix one determinant, as this kills just one
of the summands.

Instead of worrying about whether the obstruction theory on $M(\sheaf{L}',\sheaf{L})$
descends to $K$, we define the Donaldson--Thomas invariant directly:

\begin{definition}
Assume \eqref{eq:isogeny} is an isogeny, and let $G$ be its (finite) kernel.
Then the Donaldson--Thomas invariant of $K$ is
\begin{equation*}
\DT(K) = \frac{1}{|G|} \deg [M(\sheaf{L}',\sheaf{L})]^\text{vir}.
\end{equation*}
\end{definition}

Theorem \ref{thm:obstr-ab} generalizes to the relative situation,
so that deformation invariance
for the virtual fundamental class of $M(\sheaf{L}',\sheaf{L})$ holds.
Since the kernel $G$ of the isogeny \eqref{eq:isogeny} has constant
order in families, the Donaldson--Thomas invariant $\DT(K)$ is invariant
under deformations of $X$. Moreover, it agrees with Behrend's weighted Euler
characteristic, hence is an intrinsic invariant of $K$.

\begin{example}
We return to the Hilbert scheme of points in Example \ref{ex:hilbert}.
The summation map $\Hilb^n(X) \to X$ agrees, up to sign, with the codeterminant
map (use that, modulo short exact sequences, $\OO_Z$ is equivalent to a sum
of skyscrapers $k(z)$, with $z\in Z$ repeated according to multiplicity,
and $\widehat{k(z)} = \sheaf{P}_z$).

The moduli space $M(\OO_{\widehat{X}},\OO_X)$ for ideals with
trivial determinant and codeterminant, is thus nothing but the locus
\begin{equation*}
K^n(X) = \left\{ Z\in \Hilb^n(X) \ \vline\ \tsum Z = 0\right\}.
\end{equation*}
(For abelian surfaces $X$, this locus $K^n(X)$ is the generalized Kummer variety
of Beauville.) The kernel $G$ of the isogeny \eqref{eq:isogeny} is in this
case the group of $n$-torsion points $X_n \subset X$ in $X\times \widehat{X}$, so
\begin{equation*}
K = K^n(X) / X_n.
\end{equation*}

Behrend--Fantechi \cite{BF2008} found that Behrend's weighted Euler characteristic of the Hilbert
scheme of $n$ points on any threefold agrees, up to sign, with the usual Euler characteristic.
Their argument can be adapted to $K^n(X)$, showing that its weighted Euler characteristic
is $(-1)^{n+1} \chi(K^n(X))$, and so
\begin{equation*}
DT(K) = \frac{1}{|X_n|} \tilde\chi(K^n(X)) = \frac{(-1)^{n+1}}{n^{6}} \chi(K^n(X)).
\end{equation*}
See \cite[Section 4.2]{gulbrandsen2011b} for a conjectural explicit formula for the Euler characteristic of $K^n(X)$.
\end{example}

\bibliographystyle{amsplain}
\bibliography{all}

\end{document}